\numberwithin{equation}{section}
\newtheorem{theorem}{Theorem}
\newenvironment{proof}{{\bf Proof}.\ }{ \hfill $\square$}
\begin{document}

\title{Finite-time singularities in the dynamical evolution of contact lines}

\author{D.E. Pelinovsky and A.R. Giniyatullin\\
{\small Department of Mathematics, McMaster
University, Hamilton, Ontario, Canada, L8S 4K1}}

\date{\today}
\maketitle

\begin{abstract}
We study finite-time singularities in the linear advection--diffusion equation
with a variable speed on a semi-infinite line. The variable speed is determined
by an additional condition at the boundary, which models the dynamics of a contact line
of a hydrodynamic flow at a $180^{\circ}$ contact angle. Using apriori energy
estimates, we derive conditions on variable speed that guarantee that a sufficiently
smooth solution of the linear advection--diffusion equation blows up in a finite time.
Using the class of self-similar solutions to the linear advection--diffusion equation,
we find the blow-up rate of singularity formation. This blow-up rate does not agree with
previous numerical simulations of the model problem.
\end{abstract}

\section{Introduction}

Contact lines are defined by the intersection of the rigid and free boundaries of the flow.
Flows with the contact line at a $180^{\circ}$ contact angle were discussed in \cite{Benney,Dussan},
where corresponding solutions of the Navier--Stokes equations were shown to have no physical meanings.
Recently, a different approach based on the lubrication approximation and thin film equations
was developed by Benilov \& Vynnycky \cite{Benilov}.

As a particularly simple model for the flow shown on Figure \ref{fig-1}, the authors
of \cite{Benilov} derived the nonlinear advection--diffusion
equation for the free boundary $h(x,t)$ of the flow:
\begin{equation}
\label{model}
\frac{\partial h}{\partial t} + \frac{\partial}{\partial x} \left[
\frac{h^3}{3} \left( \alpha^3 \frac{\partial^3 h}{\partial x^3}
+ \frac{\partial h}{\partial x} \right) + (1 - V(t)) h \right] =
0, \quad x > 0, \;\; t > 0,
\end{equation}
where $\alpha$ is a numerical constant.
The contact line is fixed at $x = 0$ in the reference frame moving with the velocity $-V(t)$
and is defined by the boundary conditions $h|_{x = 0} = 1$ and $h_x |_{x = 0} = 0$.
The flux conservation gives the boundary condition for $h_{xxx} |_{x = 0} = -\frac{3}{2 \alpha^3}$.
For convenience, we can fix $\alpha^3 = 3$. Existence of weak
solutions of the thin-film equation (\ref{model}) for constant $V(t)$
and Neumann boundary conditions on a finite interval was recently
constructed by Chugunova {\em et al.} \cite{Chugunova1,Chugunova2}.
\begin{figure}[htbp]
\begin{center}
\includegraphics[height=5cm]{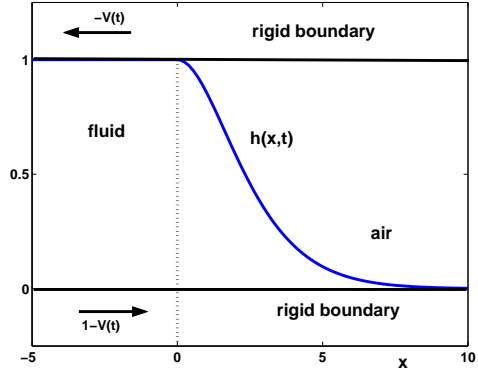}
\end{center}
\caption{Schematic picture of the flow between rigid boundaries.}
\label{fig-1}
\end{figure}

Using further asymptotic reductions with
\begin{equation}
\label{asymptotic-reduction}
h - 1 = \mathcal{O}(|V|^{-1}), \quad x = \mathcal{O}(|V|^{-1/3}), \quad
t = \mathcal{O}(|V|^{-4/3}), \quad \mbox{\rm as} \quad |V| \to \infty,
\end{equation}
the authors of \cite{Benilov} reduced the nonlinear equation (\ref{model})
with $\alpha^3 = 3$ to the linear advection--diffusion equation:
\begin{equation}
\label{pde}
\frac{\partial h}{\partial t} + \frac{\partial^4 h}{\partial x^4} =
V(t) \frac{\partial h}{\partial x}, \quad x > 0, \;\; t > 0,
\end{equation}
subject to the boundary conditions
\begin{equation}
\label{bc-pde}
h |_{x = 0} = 1, \quad  h_{x} |_{x = 0} = 0, \quad h_{xxx} |_{x = 0} = - \frac{1}{2}, \quad
t \geq 0,
\end{equation}
We assume that $h, h_x, h_{xx} \to 0$ as $x \to \infty$: in fact, any constant value of
$h$ at infinity is allowed thanks to the invariance of the linear advection--diffusion
equation (\ref{pde}) with respect to the shift
and scaling transformations. Indeed, if $h(x,t)$ solves the boundary--value problem (\ref{pde})--(\ref{bc-pde}), then
$$
\tilde{h}(x,t) = H + (1-H) h(\xi,\tau), \quad \xi = \frac{x}{(1-H)^{1/3}}, \quad
\tau = \frac{t}{(1-H)^{4/3}},
$$
with constant $H < 1$ solves the same advection--diffusion equation (\ref{pde})
with the same boundary conditions (\ref{bc-pde}) but for the variable
speed $\tilde{V}(t) = \frac{V(\tau)}{1-H}$ and with the asymptotic value $h(x,t) \to H$
as $x \to \infty$.

With three boundary conditions at $x = 0$ and
the decay conditions as $x \to \infty$, the initial-value problem for equation (\ref{pde})
is over-determined and the third (over-determining) boundary condition at $x = 0$ is used
to find the dependence of $V$ on $t$. Local existence of
solutions to the boundary--value problem (\ref{pde})--(\ref{bc-pde})
was proved in our work \cite{Pelinovsky} using
Laplace transform in $x$ and the fractional power series expansion in $t$.

We shall consider the time evolution of the boundary--value problem
(\ref{pde})--(\ref{bc-pde}) starting with the initial data
$h|_{t = 0} = h_0(x)$ for a suitable function $h_0$. In particular,
we assume that the profile $h_0(x)$ decays monotonically to zero as $x \to \infty$
and that $0$ is a non-degenerate maximum of $h_0$ such that
$h_0(0) = 1$, $h_0'(0) = 0$, and $h_0''(0) < 0$. If the solution $h(x,t)$
losses monotonicity in $x$ during the dynamical
evolution, for instance, due to the value of
\begin{equation}
\label{beta}
\beta(t) := h_{xx}(0,t)
\end{equation}
crossing $0$ from the negative side,
then we say that the flow becomes non-physical for further times and
the model breaks. Simultaneously, this may mean that the velocity $V(t)$ blows up, as
it is defined for sufficiently strong solutions of
the advection--diffusion equation (\ref{pde}) by the pointwise equation:
\begin{equation}
\label{contact-equation}
h_{xxxxx}(0,t) = V(t) \beta(t),
\end{equation}
which follows by differentiation of (\ref{pde}) in $x$ and setting $x \to 0$.

The main claim of \cite{Benilov} based on numerical computations of the reduced equation
(\ref{pde}) as well as more complicated thin-film equations is that for any suitable $h_0$, there is
a finite positive time $t_0$ such that $V(t) \to -\infty$ and $\beta(t) \to 0^-$ as $t \uparrow t_0$.
Moreover, it is claimed that $V(t)$ behaves near the blowup time as the logarithmic function of $t$, e.g.
\begin{equation}
\label{numerical-claim}
V(t) \sim C_1 \log(t_0 - t) + C_2, \quad \mbox{\rm as} \quad t \uparrow t_0,
\end{equation}
where $C_1$, $C_2$ are positive constants.

The goal of this paper is to inspect possible blow-up rates of the
singularity formation in the boundary-value problem (\ref{pde})--(\ref{bc-pde}).
First, we use apriori energy estimates to show that $V(t)$ cannot
remain positive for all times for smooth solutions of the boundary--value
problem (\ref{pde})--(\ref{bc-pde}). This result implies simultaneously
two things: if $V(t)$ remains positive, the smooth solution blows up in a
finite time, and if a smooth solution exists for all times, then $V(t)$ either oscillates
or become negative. Similarly, we also show that
$\beta(t)$ and $V(t) \beta^2(t)$ cannot remain negative for all times
in the same sense: if $\beta(t)$ and $V(t) \beta^2(t)$  remain negative,
the smooth solution blows up in a finite time, and if a smooth solution exists
for all times, then $\beta(t)$ and $V(t) \beta^2(t)$  either oscillate
or become positive. Combination of both results shows that the only way a smooth
solution can exist for all times is if the variable speed $V(t)$ oscillates from
positive to negative values back and forth.

Second, we study the class of self-similar solutions based on
the scaling transformation (\ref{asymptotic-reduction}).
The class of self-similar solutions is defined by the
linear advection--diffusion equation (\ref{pde}),
the decay condition at infinity, and the first two boundary conditions
(\ref{bc-pde}). The third boundary condition $h_{xxx} |_{x = 0} = - \frac{1}{2}$
is not satisfied for the self-similar solutions and we replace this
boundary condition with new condition $h_{xxx} |_{x = 0} = \gamma_0 V(t)$ for a fixed
$\gamma_0 < 0$. We show that the solution blows up in a finite time for positive $V(t)$ and
positive $\beta(t)$, which agrees with the scaling transformation (\ref{asymptotic-reduction})
but does not correspond to the physical requirements of the flow on Figure \ref{fig-1}.

Finally, we study how $\beta(t)$ may vanish and $V(t)$ may diverge in a finite time
by using the pointwise equation (\ref{contact-equation}) and its derivative.
We find yet another rate of singularity formulations, which is different
from the rates based on the scaling transformation (\ref{asymptotic-reduction}) and
on the numerically claimed result (\ref{numerical-claim}). Therefore,
further studies of the boundary-value problem (\ref{pde})--(\ref{bc-pde})
including more precise numerical studies are required.
These studies will be reported elsewhere.

The remainder of this paper is organized as follows. Section 2 gives apriori
energy estimates for the boundary--value problem (\ref{pde})--(\ref{bc-pde}).
Section 3 describes self-similar solutions describing blow-up rate of the singularity
formulations. Section 4 reports analysis following from pointwise equations.

\section{Apriori energy estimates}

Let us consider the advection-diffusion equation (\ref{pde}) subject to the boundary conditions
(\ref{bc-pde}) and the decay condition $h, h_x, h_{xx} \to 0$ as $x \to \infty$.
We assume existence of a smooth solution to this boundary-value problem and
show that $V(t)$ cannot remain positive for all times.

\begin{theorem}
Solutions of the boundary--value problem (\ref{pde})--(\ref{bc-pde}) do not exist
in class $h \in C(\mathbb{R}_+,L^2(\mathbb{R}_+)) \cap L^2(\mathbb{R}_+,H^2(\mathbb{R}_+))$
if $V(t) \geq V_0 > -1$ for all $t \geq t_0 \geq 0$.
\end{theorem}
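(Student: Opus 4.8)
The plan is to run an $L^2$ a priori energy estimate and to show that the hypothesis $V(t)\ge V_0>-1$ forces $\|h(\cdot,t)\|_{L^2(\mathbb{R}_+)}^2$ to decay at a uniform linear rate, hence to become negative in finite time, which is impossible. So I would argue by contradiction: assume a solution $h$ in the stated class exists for all $t\ge 0$, and (invoking the smoothness postulated at the start of the section, or else working with a mollified sequence and passing to the limit) treat $h$ as classical, with the decay $h,h_x,h_{xx}\to 0$ and $h_{xxx}$ bounded as $x\to\infty$.

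First I would multiply equation (\ref{pde}) by $h$ and integrate over $x\in(0,\infty)$, obtaining
\[
\frac12\,\frac{d}{dt}\,\|h\|_{L^2}^2 \;=\; -\int_0^\infty h\,h_{xxxx}\,dx \;+\; V(t)\int_0^\infty h\,h_x\,dx .
\]
Then integrate by parts twice in the first integral and once in the second, discarding the boundary contributions at $x=\infty$ by the decay assumptions and substituting the boundary conditions (\ref{bc-pde}) at $x=0$, that is $h(0,t)=1$, $h_x(0,t)=0$, $h_{xxx}(0,t)=-\tfrac12$. The fourth-order term contributes $h(0,t)\,h_{xxx}(0,t)-\|h_{xx}\|_{L^2}^2=-\tfrac12-\|h_{xx}\|_{L^2}^2$, the $h_xh_{xx}$ boundary term at $x=0$ vanishing since $h_x(0,t)=0$; the advection term contributes $\tfrac12 V(t)\,[h^2]_{x=0}^{x=\infty}=-\tfrac12 V(t)$. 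Collecting terms gives the identity
\[
\frac{d}{dt}\,\|h\|_{L^2}^2 \;=\; -1 \;-\; 2\,\|h_{xx}\|_{L^2}^2 \;-\; V(t).
\]

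Next, for $t\ge t_0$ I would bound the right-hand side using $\|h_{xx}\|_{L^2}^2\ge 0$ and $V(t)\ge V_0$, so that $\frac{d}{dt}\|h\|_{L^2}^2 \le -(1+V_0)$, where $1+V_0>0$ by assumption. Integrating this differential inequality from $t_0$ to $t$ yields
\[
\|h(\cdot,t)\|_{L^2}^2 \;\le\; \|h(\cdot,t_0)\|_{L^2}^2 \;-\; (1+V_0)\,(t-t_0),
\]
whose right-hand side is strictly negative once $t > t_0 + \|h(\cdot,t_0)\|_{L^2}^2/(1+V_0)$. This contradicts the nonnegativity of $\|h(\cdot,t)\|_{L^2}^2$, so no such solution can persist for all times.

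I expect the only genuine difficulty to be the rigorous justification of the energy identity at the low regularity of the admissible class: differentiating $\|h\|_{L^2}^2$ in $t$ and making sense of the pointwise trace $h_{xxx}(0,t)$ both require either the extra smoothness assumed in the section's preamble or an approximation argument, together with verifying that the boundary terms at $x=\infty$ really vanish. Once the identity for $\frac{d}{dt}\|h\|_{L^2}^2$ is secured, the remainder is elementary.
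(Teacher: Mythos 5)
Your proof is correct and follows essentially the same route as the paper: both derive the $L^2$ energy identity $\frac{d}{dt}\|h\|_{L^2(\mathbb{R}_+)}^2 + 2\|h_{xx}\|_{L^2(\mathbb{R}_+)}^2 = -(1+V(t))$ from the boundary conditions (\ref{bc-pde}) and conclude by integrating in time that the nonnegative left-hand side would have to become negative. The only cosmetic difference is that you discard the dissipation term before integrating, whereas the paper keeps it inside the time integral; the contradiction is identical.
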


\begin{proof}
From the advection-diffusion equation (\ref{pde}), we have the energy balance:
$$
\partial_t \left( \frac{1}{2} h^2 \right) + \partial_x \left( h h_{xxx} - h_x h_{xx} -
\frac{1}{2} V(t) h^2 \right) + (h_{xx})^2 = 0.
$$
Integrating this equation in $x$ on $(0,\infty)$ and using the boundary conditions
(\ref{bc-pde}) and the decay conditions as $x \to \infty$, we obtain apriori energy
estimates:
\begin{equation}
\frac{d}{dt} \| h(\cdot,t) \|_{L^2(\mathbb{R}_+)}^2 + 2 \| h_{xx}(\cdot,t) \|^2_{L^2(\mathbb{R}_+)} = -(1 + V(t)).
\label{apriori-1}
\end{equation}
If we have a solution in class $h \in C(\mathbb{R}_+,L^2(\mathbb{R}_+)) \cap L^2(\mathbb{R}_+,H^2(\mathbb{R}_+))$, then
integrating the apriori energy estimate (\ref{apriori-1}), we obtain
\begin{equation}
\| h(\cdot,t) \|_{L^2(\mathbb{R}_+)}^2 + 2 \int_0^t \| h_{xx}(\cdot,\tau) \|^2_{L^2(\mathbb{R}_+)} d \tau
= \| h_0 \|_{L^2(\mathbb{R}_+)}^2 - \int_0^t (1 + V(\tau)) d \tau.
\end{equation}
Since the left-hand-side is strictly positive, the assertion of the theorem is proved.
\end{proof}

Next, we rewrite the advection--diffusion equation (\ref{pde})
for the variable $u = h_x$ in the form
\begin{equation}
\label{pde-u}
u_t + u_{xxxx} = V(t) u_x, \quad \; x > 0, \;\; t > 0,
\end{equation}
subject to the boundary conditions at the contact line
\begin{equation}
\label{bc}
u |_{x = 0} = 0, \quad  u_{xx} |_{x = 0} = -\frac{1}{2}, \quad
u_{xxx} |_{x = 0} = 0, \quad t \geq 0,
\end{equation}
where the boundary conditions $u_{xxx} |_{x = 0} = h_{xxxx} |_{x=0} = 0$ follows
from the boundary conditions $h|_{x = 0} = 1$ and $h_x |_{x =0} = 0$ as well as the
advection--diffusion equation (\ref{pde}) as $x \to 0$. Denote $\beta(t) = h_{xx} |_{x = 0} = u_x |_{x=0}$
and recall that $\beta(0) < 0$ initially. Again, we assume existence of a smooth solution
to the boundary-value problem (\ref{pde-u})--(\ref{bc}) and show that $\beta(t)$ and $V(t) \beta^2(t)$
cannot remain negative for all times.

\begin{theorem}
Solutions of the boundary--value problem (\ref{pde-u})--(\ref{bc}) do not exist
in class $u \in C(\mathbb{R}_+,L^2(\mathbb{R}_+)) \cap L^2(\mathbb{R}_+,H^2(\mathbb{R}_+))$ if
$\beta(t) \leq \beta_0 < 0$ for all $t \geq t_0 \geq 0$.
\end{theorem}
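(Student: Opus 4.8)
The plan is to mirror the energy argument used for Theorem 1, now applied to equation (\ref{pde-u}) for the variable $u = h_x$. First I would multiply (\ref{pde-u}) by $u$ and rewrite the result as the local conservation law
\[
\partial_t\Bigl(\tfrac12 u^2\Bigr) + \partial_x\Bigl(u u_{xxx} - u_x u_{xx} - \tfrac12 V(t) u^2\Bigr) + (u_{xx})^2 = 0,
\]
which is obtained exactly as in the proof of Theorem 1 (the advection term contributes the flux $-\tfrac12 V(t) u^2$, using $V(t)u u_x = \partial_x(\tfrac12 V(t)u^2)$).

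Next I would integrate this identity over $x\in(0,\infty)$ and evaluate the boundary terms using (\ref{bc}) and the decay conditions at infinity. The key structural point is that $u|_{x=0}=0$ makes both $u u_{xxx}$ and $V(t)u^2$ vanish at $x=0$, so the advection speed $V(t)$ drops out of the boundary flux entirely; the only surviving contribution comes from $u_x(0,t)\,u_{xx}(0,t) = \beta(t)\cdot(-\tfrac12)$. Carrying out the signs, one arrives at the apriori identity
\[
\frac{d}{dt}\,\|u(\cdot,t)\|_{L^2(\mathbb{R}_+)}^2 + 2\,\|u_{xx}(\cdot,t)\|_{L^2(\mathbb{R}_+)}^2 = \beta(t),
\]
the analogue of (\ref{apriori-1}). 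Note that its right-hand side depends on $\beta(t)$ alone, which is exactly why the hypothesis of the theorem involves only $\beta$ and not $V$.

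Then I would integrate in time from $t_0$ to $t$. For a solution in the class $C(\mathbb{R}_+,L^2(\mathbb{R}_+))\cap L^2(\mathbb{R}_+,H^2(\mathbb{R}_+))$ both $\|u(\cdot,t_0)\|_{L^2(\mathbb{R}_+)}$ and $\int_{t_0}^t \|u_{xx}(\cdot,\tau)\|_{L^2(\mathbb{R}_+)}^2\,d\tau$ are finite, so under the assumption $\beta(t)\le\beta_0<0$ for $t\ge t_0$ we get
\[
\|u(\cdot,t)\|_{L^2(\mathbb{R}_+)}^2 + 2\int_{t_0}^t \|u_{xx}(\cdot,\tau)\|_{L^2(\mathbb{R}_+)}^2\,d\tau = \|u(\cdot,t_0)\|_{L^2(\mathbb{R}_+)}^2 + \int_{t_0}^t \beta(\tau)\,d\tau \le \|u(\cdot,t_0)\|_{L^2(\mathbb{R}_+)}^2 + \beta_0\,(t - t_0),
\]
whose right-hand side tends to $-\infty$ as $t\to\infty$ while the left-hand side is nonnegative --- a contradiction, which proves the theorem. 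The one place that needs care is the justification of the energy identity itself: the spatial integrations by parts require not only $h,h_x,h_{xx}\to0$ but also decay of $u_{xx}=h_{xxx}$ and $u_{xxx}=h_{xxxx}$ at infinity, so for a solution merely in the stated energy class I would first establish the identity for smooth, rapidly decaying solutions and then pass to the limit (as is done implicitly in the proof of Theorem 1). I expect this regularity bookkeeping, rather than the estimate itself, to be the only obstacle.
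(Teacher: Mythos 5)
Your proposal is correct and follows essentially the same route as the paper: the same energy balance $\partial_t(\tfrac12 u^2)+\partial_x(u u_{xxx}-u_x u_{xx}-\tfrac12 V u^2)+(u_{xx})^2=0$, the same evaluation of the boundary flux yielding $\frac{d}{dt}\|u\|_{L^2}^2+2\|u_{xx}\|_{L^2}^2=\beta(t)$, and the same integration in time to contradict positivity of the left-hand side. Your added care about integrating from $t_0$ rather than $0$ and about justifying the integrations by parts within the stated energy class is a minor refinement of, not a departure from, the paper's argument.
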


\begin{proof}
From the advection-diffusion equation (\ref{pde-u}), we have the energy balance:
$$
\partial_t \left( \frac{1}{2} u^2 \right) + \partial_x \left( u u_{xxx} - u_x u_{xx} -
\frac{1}{2} V(t) u^2 \right) + (u_{xx})^2 = 0.
$$
Integrating this equation in $x$ on $(0,\infty)$ and using the boundary conditions
(\ref{bc}) and the decay conditions as $x \to \infty$, we obtain apriori energy
estimates:
\begin{equation}
\label{apriori-2}
\frac{d}{dt} \| u(\cdot,t) \|_{L^2(\mathbb{R}_+)}^2 + 2 \| u_{xx}(\cdot,t) \|^2_{L^2(\mathbb{R}_+)} = \beta(t).
\end{equation}
If we have a solution in class $u \in C(\mathbb{R}_+,L^2(\mathbb{R}_+)) \cap L^2(\mathbb{R}_+,H^2(\mathbb{R}_+))$, then
integrating the apriori energy estimate (\ref{apriori-2}), we obtain
\begin{equation}
\| u(\cdot,t) \|_{L^2(\mathbb{R}_+)}^2 + 2 \int_0^t \| u_{xx}(\cdot,\tau) \|^2_{L^2(\mathbb{R}_+)} d \tau
= \| u_0 \|_{L^2(\mathbb{R}_+)}^2 + \int_0^t \beta(\tau) d \tau.
\end{equation}
Since the left-hand-side is strictly positive, the assertion of the theorem follows.
\end{proof}

\begin{theorem}
Solutions of the boundary--value problem (\ref{pde-u})--(\ref{bc}) do not exist
in class $u \in C(\mathbb{R}_+,H^1(\mathbb{R}_+)) \cap L^2(\mathbb{R}_+,H^3(\mathbb{R}_+))$ if
$V(t) \leq V_0 < 0$ and $|\beta(t)| \geq \beta_0 > 0$ for all $t \geq t_0 \geq 0$.
\end{theorem}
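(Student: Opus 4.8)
The plan is to run the same energy-balance argument as in Theorems 1 and 2, but one derivative higher: I would work with $w := u_x = h_{xx}$ instead of with $u$ or $h$. This is exactly why the hypothesis asks for $u$ one Sobolev order smoother, namely $u\in C(\mathbb{R}_+,H^1(\mathbb{R}_+))\cap L^2(\mathbb{R}_+,H^3(\mathbb{R}_+))$, so that $w\in C(\mathbb{R}_+,L^2(\mathbb{R}_+))\cap L^2(\mathbb{R}_+,H^2(\mathbb{R}_+))$. Differentiating (\ref{pde-u}) in $x$ shows that $w$ solves the same linear advection--diffusion equation $w_t+w_{xxxx}=V(t)w_x$ for $x>0$, $t>0$.

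First I would collect the boundary data for $w$ at $x=0$. From (\ref{bc}) we read off $w|_{x=0}=u_x|_{x=0}=\beta(t)$, $w_x|_{x=0}=u_{xx}|_{x=0}=-\tfrac12$, and $w_{xx}|_{x=0}=u_{xxx}|_{x=0}=0$. The one extra ingredient, and the crucial one, is the value $w_{xxx}|_{x=0}=u_{xxxx}|_{x=0}=h_{xxxxx}|_{x=0}$, which is not free but is pinned down by the pointwise contact equation (\ref{contact-equation}): $w_{xxx}|_{x=0}=V(t)\beta(t)$.

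Next, multiplying the equation for $w$ by $w$ yields the local balance
$$\partial_t\left(\tfrac12 w^2\right)+\partial_x\left(w w_{xxx}-w_x w_{xx}-\tfrac12 V(t) w^2\right)+(w_{xx})^2=0 .$$
Integrating in $x$ on $(0,\infty)$, the fluxes vanish at $x=\infty$ by the decay assumptions on $w,w_x,w_{xx},w_{xxx}$, while at $x=0$ the term $w_x w_{xx}$ drops out since $w_{xx}|_{x=0}=0$, leaving the contribution $-\big(w w_{xxx}-\tfrac12 V w^2\big)\big|_{x=0}=-\big(V\beta^2-\tfrac12 V\beta^2\big)=-\tfrac12 V(t)\beta^2(t)$. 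Hence
$$\frac{d}{dt}\|w(\cdot,t)\|_{L^2(\mathbb{R}_+)}^2+2\|w_{xx}(\cdot,t)\|_{L^2(\mathbb{R}_+)}^2=V(t)\beta^2(t).$$
Integrating this identity in $t$ in the stated regularity class gives
$$\|w(\cdot,t)\|_{L^2(\mathbb{R}_+)}^2+2\int_0^t\|w_{xx}(\cdot,\tau)\|_{L^2(\mathbb{R}_+)}^2\,d\tau=\|w_0\|_{L^2(\mathbb{R}_+)}^2+\int_0^t V(\tau)\beta^2(\tau)\,d\tau .$$
Under $V(t)\le V_0<0$ and $|\beta(t)|\ge\beta_0>0$ for $t\ge t_0$, the integrand on the right is bounded above by $V_0\beta_0^2<0$ for $\tau\ge t_0$ (the contribution of the bounded interval $[0,t_0]$ being finite since the solution is smooth), so the right-hand side tends to $-\infty$ as $t\to\infty$, while the left-hand side is nonnegative. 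This contradiction proves the theorem.

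The main obstacle — really the only non-routine point — is the evaluation of the boundary flux at $x=0$: one must recognize that the fifth-derivative boundary datum is fixed by the contact equation (\ref{contact-equation}), and check that the sign bookkeeping leaves exactly $-\tfrac12 V\beta^2$, i.e. that the $w w_{xxx}$ and $\tfrac12 V w^2$ terms combine rather than cancel. A secondary technical point is to confirm that the regularity class $C(\mathbb{R}_+,H^1(\mathbb{R}_+))\cap L^2(\mathbb{R}_+,H^3(\mathbb{R}_+))$ legitimizes both the integration by parts (vanishing of the fluxes at $x=\infty$) and the integration of the differential identity in $t$, exactly as in the proofs of Theorems 1 and 2.
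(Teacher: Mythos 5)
Your proposal is correct and lands on exactly the paper's energy identity $\frac{d}{dt}\|u_x(\cdot,t)\|_{L^2(\mathbb{R}_+)}^2 + 2\|u_{xxx}(\cdot,t)\|_{L^2(\mathbb{R}_+)}^2 = V(t)\beta^2(t)$ and the same positivity contradiction, so it is essentially the same approach. The only (cosmetic) difference is that the paper multiplies the $u$-equation by $u_{xx}$ and integrates by parts, which needs only the boundary data $u|_{x=0}=0$ and $u_{xxx}|_{x=0}=0$, whereas your route of differentiating first and testing with $w=u_x$ must additionally invoke the pointwise contact equation $u_{xxxx}|_{x=0}=V(t)\beta(t)$ to evaluate the flux --- a valid but avoidable extra ingredient.
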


\begin{proof}
Multiplying the advection-diffusion equation (\ref{pde-u}) by $u_{xx}$,
integrating this equation in $x$ on $(0,\infty)$, and using the boundary conditions
(\ref{bc}) and the decay conditions as $x \to \infty$, we obtain apriori energy
estimates:
\begin{equation}
\label{apriori-3}
\frac{d}{dt} \| u_x(\cdot,t) \|_{L^2(\mathbb{R}_+)}^2 + 2 \| u_{xxx}(\cdot,t) \|^2_{L^2(\mathbb{R}_+)} = V(t) \beta^2(t).
\end{equation}
If we have a solution in class $u \in C(\mathbb{R}_+,H^1(\mathbb{R}_+)) \cap L^2(\mathbb{R}_+,H^3(\mathbb{R}_+))$, then
integrating the apriori energy estimate (\ref{apriori-3}), we obtain
\begin{equation}
\| u_x(\cdot,t) \|_{L^2(\mathbb{R}_+)}^2 + 2 \int_0^t \| u_{xxx}(\cdot,\tau) \|^2_{L^2(\mathbb{R}_+)} d \tau
= \| u_0' \|_{L^2(\mathbb{R}_+)}^2 + \int_0^t V(\tau) \beta^2(\tau) d \tau.
\end{equation}
Since the left-hand-side is strictly positive, the assertion of the theorem follows.
\end{proof}

\section{Self-similar solutions for singularity formations}

Let us consider the class of self-similar solutions to the
linear advection--diffusion equation (\ref{pde}):
\begin{equation}
\label{self-similar}
V(t) = \frac{t_0 V_0}{(t_0-t)^{3/4}}, \quad
h(x,t) = f(\xi), \quad \xi = \frac{x}{(t_0-t)^{1/4}},
\end{equation}
where $t_0$ is an arbitrary positive parameter for a finite blowup time,
$V_0$ is an arbitrary parameter for the initial velocity, and $f(\xi)$
is a solution of the differential equation:
\begin{equation}
\label{ode}
\frac{d^4 f}{d \xi^4} + \frac{1}{4} (\xi - 4 t_0 V_0) \frac{d f}{d \xi} = 0, \quad \xi > 0.
\end{equation}
We are looking at a solution of the boundary-value problem associated with
the first two boundary conditions at the contact line:
\begin{equation}
\label{bc-ode}
f(0) = 1, \quad f'(0) = 0,
\end{equation}
and the decay condition $f(\xi), f'(\xi) \to 0$ as $\xi \to \infty$. Note
that the third condition at the contact line $h_{xxx} |_{x = 0} = -\frac{1}{2}$
is not satisfied by the self-similar solution (\ref{self-similar}). The
revised third boundary condition is given by
\begin{equation}
\label{extended-bc}
h_{xxx} |_{x = 0} = \frac{f'''(0)}{(t_0-t)^{3/4}} = \gamma_0 V(t),
\end{equation}
where $\gamma_0$ is constant such that $f'''(0) = t_0 V_0 \gamma_0$.
Also note that the class of self-similar solutions (\ref{self-similar})
is compatible with the asymptotic scaling (\ref{asymptotic-reduction}) used in
the derivation of the linear advection-diffusion equation (\ref{pde}).

Setting
$$
g(z) = f'(\xi), \quad z = \xi - 4 t_0 V_0,
$$
we reduce the boundary-value problem (\ref{ode})--(\ref{bc-ode}) to the following system:
\begin{equation}
\label{ode-reduced}
\left\{ \begin{array}{l} 4 g'''(z) + z g(z) = 0, \quad \quad z > z_0, \\
g(z_0) = 0, \\
g(z), g'(z) \to 0 \quad \mbox{\rm as} \;\; z \to \infty, \end{array} \right.
\end{equation}
where $z_0 = -4t_0 V_0$. A suitable solution of this boundary--value problem
is constructed in the following theorem.

\begin{theorem}
There exists a unique (up to scalar multiplication) positive solution
of the boundary--value problem (\ref{ode-reduced}) on $(z_0,\infty)$ with $z_0 < 0$.
\end{theorem}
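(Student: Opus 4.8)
The plan is to realise the required solution as the essentially unique solution of $4g'''+zg=0$ decaying as $z\to+\infty$, and to identify $z_0$ with its largest zero. Since the equation has entire coefficients, every solution extends to a smooth function on $\R$, and I would first analyse the behaviour as $z\to\pm\infty$ by the Liouville--Green ansatz $g\sim e^{S(z)}$ with $4(S')^3\approx-z$. For $z\to+\infty$ the three exponents are $S'\in\{-(z/4)^{1/3},\,(z/4)^{1/3}e^{\pm i\pi/3}\}$, so one fundamental solution decays like $z^{-1/3}\exp\!\bigl(-\tfrac34\,4^{-1/3}z^{4/3}\bigr)$ together with all its derivatives, while the other two grow like $z^{-1/3}\exp\!\bigl(\tfrac38\,4^{-1/3}z^{4/3}\bigr)$ times oscillatory factors $\cos,\sin\bigl(\tfrac{3\sqrt3}{8}\,4^{-1/3}z^{4/3}+\cdots\bigr)$. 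Hence the conditions $g,g'\to0$ as $z\to+\infty$ pick out a one-dimensional subspace; let $g_\infty$ span it, normalised so $g_\infty(z)>0$ for large $z$. The same computation at $z\to-\infty$ gives one decaying fundamental solution and two that grow \emph{and} oscillate.

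Next I would record two energy identities. Multiplying $4g_\infty'''+zg_\infty=0$ by $g_\infty$ and integrating over $(a,\infty)$, the super-exponential decay at $+\infty$ kills the boundary terms there and leaves
\[ 2\,(g_\infty'(a))^2-4\,g_\infty(a)g_\infty''(a)+\int_a^\infty z\,g_\infty(z)^2\,dz=0,\qquad a\in\R. \]
Multiplying instead by $z\,g_\infty$ and integrating over $\R$ gives, whenever all the boundary terms vanish,
\[ 6\int_{\R}(g_\infty'(z))^2\,dz+\int_{\R}z^2 g_\infty(z)^2\,dz=0. \]

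I claim $g_\infty$ must vanish somewhere and that its largest zero is negative. Suppose $g_\infty$ had no zero, so $g_\infty>0$ on $\R$. Because the two growing fundamental solutions at $-\infty$ oscillate, any nonzero component of $g_\infty$ along them would force infinitely many sign changes of $g_\infty$ near $-\infty$; positivity therefore makes $g_\infty$ a multiple of the decaying solution at $-\infty$, so $g_\infty,g_\infty',g_\infty''\to0$ super-exponentially there too, all boundary terms in the second identity vanish, and it forces $g_\infty\equiv0$ --- a contradiction. Hence $g_\infty$ has a zero; its zero set is closed and bounded above (since $g_\infty>0$ near $+\infty$), so it has a maximum $z_0$, and $g_\infty>0$ on $(z_0,\infty)$ by continuity. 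Evaluating the first identity at $a=z_0$ with $g_\infty(z_0)=0$ gives $\int_{z_0}^\infty z\,g_\infty(z)^2\,dz=-2(g_\infty'(z_0))^2\le0$, and since $g_\infty^2>0$ on $(z_0,\infty)$ this is impossible unless $z_0<0$. Thus $(g_\infty,z_0)$ is a positive solution of (\ref{ode-reduced}) with $z_0<0$.

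For uniqueness, any positive solution $(g,z_0')$ of (\ref{ode-reduced}) satisfies $g,g'\to0$ at $+\infty$, hence $g=c\,g_\infty$ with $c>0$ (matching signs near $+\infty$), and then $g(z_0')=0$ together with $g>0$ on $(z_0',\infty)$ forces $z_0'$ to be the largest zero of $g_\infty$, i.e.\ $z_0'=z_0$; so the positive solution is unique up to a positive scalar multiple. The delicate part is the asymptotic classification of the first paragraph: without the oscillation of the two growing solutions at $-\infty$ one cannot exclude a globally positive $g_\infty$, and without the super-exponential decay rates at $+\infty$ the two energy identities are not even justified. If a closed form is wanted, the contour representation $g(z)=\int_\Gamma e^{zt+t^4}\,dt$, with $\Gamma$ chosen through the real saddle $4t^3=-z$, makes every required asymptotic explicit by steepest descent.
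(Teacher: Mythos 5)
Your proof is correct and shares the paper's overall skeleton (WKB classification of the three fundamental solutions at $\pm\infty$, a one-dimensional decaying subspace at $+\infty$, and an integral identity to exclude a globally positive decaying solution), but it differs in two substantive ways. First, to rule out $G>0$ on all of $\R$ the paper multiplies the equation by $G'$ and integrates over $\R$ to get $4\int(G'')^2+\tfrac12\int G^2=0$, whereas you multiply by $zg$ to get $6\int(g')^2+\int z^2g^2=0$; both multipliers do the job once the oscillation of the two growing branches at $-\infty$ forces a sign-definite solution to lie in the decaying direction there, and your identity is no harder to justify. Second, and more interestingly, your proof that $z_0<0$ is genuinely different: you evaluate the local identity $2(g'(z_0))^2-4g(z_0)g''(z_0)+\int_{z_0}^\infty z\,g^2\,dz=0$ at the largest zero and observe that $z_0\ge 0$ would make the integral strictly positive while the identity forces it to be $\le 0$. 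The paper instead exploits the symmetry $z\mapsto -z$ of $4g'''+zg=0$ and the sign of $4\tilde G'''=-z\tilde G$ to show $G$ is monotone decreasing on $[0,\infty)$, hence cannot vanish there. Your route is shorter and more self-contained for the theorem as stated; the paper's monotonicity argument buys extra information ($G'(z_0)>0$, $G''(z_0)<0$) that is reused after the theorem to fix the sign of the constant $C$ and of $\beta(t)$, so if you adopted your argument in the paper you would still need something like the symmetry/monotonicity discussion later. The only step you should flag as informal (in both your write-up and the paper's) is the claim that a nonzero component along the oscillatory growing branches at $-\infty$ forces infinitely many sign changes; your closing remark about the contour integral $\int_\Gamma e^{zt+t^4}\,dt$ is exactly the right way to make that rigorous.
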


\begin{proof}
As $z \to \infty$, there are three fundamental solutions
of the linear equation
\begin{equation}
\label{diff-eq}
4 g'''(z) + z g(z) = 0.
\end{equation}
One solution decays to $0$ monotonically as $z \to \infty$ and
the other two solutions oscillate and diverge as $z \to \infty$. Therefore, the space of
solutions of the boundary--value problem (\ref{ode-reduced}) is spanned
by a particular solution (denoted by $G$) decaying to $0$ at infinity.
To define $G$ uniquely, we construct a decaying solution of
the differential equation (\ref{diff-eq}) asymptotically by
using the WKB analysis \cite{WKB}:
\begin{equation}
\label{solution-G}
G(z) = \exp\left(-\frac{3 z^{4/3}}{2^{8/3}}\right) \left[ \frac{1}{z^{1/3}} + \mathcal{O}\left(\frac{1}{z^{2/3}}\right)\right]
\quad \mbox{\rm as} \quad z \to \infty,
\end{equation}
where corrections terms can be identified in terms of power series
in inverse powers of $z^{1/3}$. The solution $G$ of the linear
equation (\ref{diff-eq}) can be extended globally for all $z \in \mathbb{R}$.
To satisfy the boundary condition at $z_0$, it remains to show
that there is $z_0 \in \mathbb{R}$ such that $G(z_0) = 0$.

It is clear that $z_0 \in \mathbb{R}$ exists. Indeed, if $z_0$ does not exist,
then $G(z)$ remains positive for all $z \in \mathbb{R}$, which is only possible
if $G(z)$ decays to $0$ monotonically as $z \to -\infty$ (the other two
solutions again oscillate and diverge as $z \to -\infty$). However, then $G \in H^2(\mathbb{R})$
is a global solution of the differential equation (\ref{diff-eq}) for all $z \in \mathbb{R}$.
Multiplying this equation by $G'$ and integrating by parts, we obtain the
contradiction
\begin{equation}
4 \int_{\mathbb{R}} (G'')^2 dz + \frac{1}{2} \int_{\mathbb{R}} G^2 dz = 0,
\end{equation}
which proves that no $G \in H^2(\mathbb{R})$ may exist. Furthermore,
$z_0 < 0$ because $G(z)$ is monotonically decaying for all $z > 0$. To see this,
we use the fact that the differential equation (\ref{diff-eq})
is invariant under the transformation
$z \mapsto -z$, so that $\tilde{G}(z) := G(-z)$ is another solution of (\ref{diff-eq}).
The function $\tilde{G}(z)$ increases monotonically for large negative $z$.
Since $4 \tilde{G}'''(z) = -z \tilde{G}(z) > 0$ for all $z < 0$,
$\tilde{G}(z)$ remains monotonically increasing for all $z \leq 0$ and hence $G(z)$ decreases
monotonically for all $z \geq 0$. Therefore, $z_0 < 0$, that is,
$V_0 > 0$ (if $t_0 > 0$). The value $z_0$ is uniquely determined as
the largest negative zero of the positive function $G(z)$.
\end{proof}

Figure \ref{fig-2} shows numerical approximation of the solution $G(z)$ satisfying the boundary--value
problem (\ref{ode}). The numerical approximation is obtained with the standard Heun method.
\begin{figure}[htbp]
\begin{center}
\includegraphics[height=5cm]{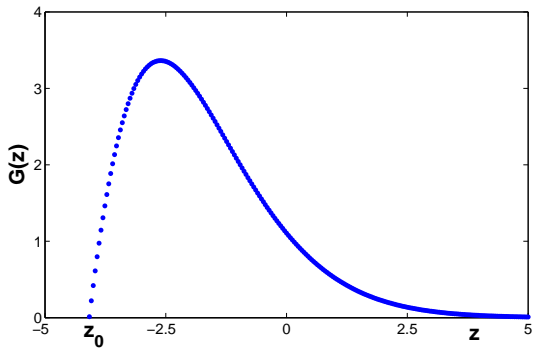}
\end{center}
\caption{Numerical approximation of the solution $G(z)$ of the boundary--value problem (\ref{ode}).}
\label{fig-2}
\end{figure}

A general solution of the boundary-value problem (\ref{ode-reduced}) is
given by $g(z) = C G(z)$. To determine the constant $C$, we use
the modified boundary condition (\ref{extended-bc}). If $h_{xxx} |_{t = 0} < 0$
as inherited from the third boundary condition in (\ref{bc-pde}), we should have
$g''(0) = f'''(0)  < 0$ or $C G''(z_0) < 0$. By the continuity arguments,
we have $G''(z_0) < 0$ and therefore, $C > 0$. Indeed, $\tilde{G}(z) := G(-z)$ is monotonically
increasing function for all $z \leq 0$ with $4 \tilde{G}'''(z) = -z \tilde{G}(z) > 0$.
When $z > 0$, $\tilde{G}'''(z) < 0$ as long as $\tilde{G}(z) > 0$, so that
there is $\tilde{z}_0 \in (0,z_0)$, such that $\tilde{G}''(z) < 0$ for all
$z \in (-\tilde{z}_0,-z_0]$, or equivalently, $G''(z) < 0$ for all
$z \in [z_0,\tilde{z}_0)$. Therefore, $G''(z_0) < 0$ as on Figure \ref{fig-2}.

By the same argument, there is $\tilde{\tilde{z}}_0 \in (\tilde{z}_0,z_0]$
such that $\tilde{G}'(z) < 0$ for all $z \in (-\tilde{\tilde{z}}_0,-z_0]$,
or equivalently, $G'(z) > 0$ for all $z \in [z_0,\tilde{\tilde{z}}_0)$.
Therefore, $G'(z_0) > 0$ as on Figure \ref{fig-2}, which implies that
$$
\beta(t) = \frac{f''(0)}{(t_0-t)^{1/2}} = \frac{C G'(z_0)}{(t_0-t)^{1/2}} > 0, \quad t \in [0,t_0).
$$

We conclude that the class of self-similar solutions (\ref{self-similar}) does not
represent the relevant dynamics of the boundary-value problem (\ref{pde})--(\ref{bc-pde})
in the context of the physical requirements of the flow on Figure \ref{fig-1}
because $\beta(t)$ is supposed to be negative at least for initial values of $t \geq 0$.

\section{Pointwise equations}

We give here additional estimates of how the solution of the boundary--value problem
(\ref{pde})--(\ref{bc-pde}) may blow up in a finite time, based on the pointwise
equation (\ref{contact-equation}) and its derivative.
We look at the boundary--value problem (\ref{pde-u})--(\ref{bc}) and assume existence
of a sufficiently smooth solution. By taking the limit $x \to 0$, we recover the pointwise
equation (\ref{contact-equation}) rewritten in new variables as
\begin{equation}
\label{contact-1}
u_{xxxx} |_{x = 0} = V(t) \beta(t), \quad t \geq 0,
\end{equation}
where $\beta(t) = u_x |_{x = 0}$. By taking a derivative of the linear advection--diffusion
equation (\ref{pde-u}) in $x$ and the limit $x \to 0$, we obtain another pointwise equation:
\begin{equation}
\label{contact-2}
\frac{d \beta}{d t} + u_{xxxxx} |_{x = 0} = -\frac{1}{2} V(t), \quad t \geq 0.
\end{equation}
The system of equations (\ref{contact-1}) and (\ref{contact-2}) can be rewritten in
the partially closed form:
\begin{equation}
\label{contact}
\frac{d \beta}{d t} = -\frac{u_{xxxx} |_{x = 0}}{2 \beta(t)} - u_{xxxxx} |_{x = 0}, \quad t \geq 0.
\end{equation}

Let us now assume that there is $t_0 > 0$ such that
\begin{equation}
\label{assumed-rate}
\beta(t) \to 0, \quad u_{xxxx} |_{x = 0} \to a_4, \quad u_{xxxxx} |_{x = 0} \to a_5, \quad \mbox{\rm as} \quad t \uparrow t_0,
\end{equation}
where $a_4 \neq 0$ and $|a_5| < \infty$. Then, asymptotic analysis of the differential
equation (\ref{contact}) shows that
\begin{equation}
\label{asymptotic-rate}
\beta^2(t) = a_4(t_0 - t) + \mathcal{O}(t_0 - t)^{3/2}, \quad
V(t) = \sqrt{\frac{a_4}{t_0 - t}} + \mathcal{O}(1), \quad \mbox{\rm as} \quad t \uparrow t_0,
\end{equation}
under the constraint that $a_4 > 0$. The asymptotic rate (\ref{asymptotic-rate})
is different both from the scaling transformation (\ref{asymptotic-reduction}) and
the numerically claimed result (\ref{numerical-claim}). In the context of the
numerical result (\ref{numerical-claim}), this pointwise analysis may imply that
either $a_4 = 0$ or $a_5 \to \infty$ in the assumption (\ref{assumed-rate}).

We conclude that three different rates of the singularity formations
claimed in (\ref{numerical-claim}) and obtained in (\ref{self-similar}) and (\ref{asymptotic-rate})
indicate complexity of dynamics of the boundary-value problem (\ref{pde})--(\ref{bc-pde})
or its equivalent version (\ref{pde-u})--(\ref{bc}). Further studies of
dynamical evolution of contact lines within this reduced problem are needed,
including more precise numerical simulations.

\vspace{0.25cm}

{\bf Acknowledgement:} The authors are thankful to E.S. Benilov and R. Taranets
for useful discussions and for sharing their unpublished results at an early stage of his research.

\end{document}